\newtheorem{theorem}{Theorem}
\newtheorem{lemma}[theorem]{Lemma}
\newtheorem{proposition}[theorem]{Proposition}
\newtheorem{remark}[theorem]{Remark}
\numberwithin{equation}{section}
\def\Malg{\mathcal{M}}
\def\M{\mathcal{M}}
\def\E{\mathcal{E}}
\def\g{\mathfrak{g}}
\def\m{\mathfrak{m}}
\def\glie{{\mathfrak{g}_{\mathrm{Lie}}}}
\def\mlie{{\mathfrak{m}_{\mathrm{symm}}}}
\def\gann{{\mathfrak{g}^{\mathrm{ann}}}}
\def\mann{{\mathfrak{m}^{\mathrm{anti}}}}
\def\f{\mathbf k}
\def\Leib{\mathrm{Leib}}
\begin{document}

\title[DG Lie algebras and Leibniz algebra cohomology]{Differential graded Lie algebras and Leibniz algebra cohomology}

\author{Jacob Mostovoy}

\address{Departamento de Matem\'aticas, CINVESTAV Av. IPN 2508 Col. San Pedro Zacatenco, Ciudad de M\'exico,  C.P.\ 07360, Mexico}

\begin{abstract}
In this note, we interpret Leibniz algebras as differential graded Lie algebras. Namely, we consider two functors from the category of Leibniz algebras to 
that of differential graded Lie algebras and show that they naturally give rise to the Leibniz cohomology and the Chevalley-Eilenberg cohomology. As an application, we prove a conjecture stated by Pirashvili in arXiv:1904.00121 [math.KT].

\end{abstract}

%\subjclass[2010]{Primary }

%\keywords{Leibniz algebra}

\maketitle

\section{Introduction}

Perhaps, the simplest non-trivial example of a differential graded (abbreviated as ``{DG}'') Lie algebra is the \emph{cone} on a Lie algebra $\g$.  It consists of two copies of $\g$  placed in degrees 0 and 1 with the differential of degree $-1$ being the identity map. The universal enveloping algebra of the cone on $\g$ is isomorphic, as a differential graded $U(\g)$-module, to the Chevalley-Eilenberg complex of $\g$ which is a free resolution of the base field considered as a trivial $U(\g)$-module. The cone is, clearly, a functor from Lie algebras to DG Lie algebras; one may ask whether other such functors give rise to useful homology and cohomology theories in a similar way, with the corresponding universal enveloping algebras replacing the Chevalley-Eilenberg complex. 

The correct setting for this question may be that of the Leibniz algebras since they are, essentially, truncations of DG Lie algebras. In this note we observe that the complex calculating the Leibniz cohomology of a Leibniz algebra $\g$ is also obtained from the universal enveloping algebra of a certain DG Lie algebra associated functorially with $\g$. Moreover, we exhibit a Chevalley-Eilenberg complex for Leibniz algebras, which reduces to the usual Chevalley-Eilenberg complex when the bracket of the Leibniz algebra is antisymmetric. In the case of trivial coefficients, the cohomology calculated by this Chevalley-Eilenberg complex is nothing more but the cohomology of the maximal Lie quotient $\glie$ of $\g$; however, the complex itself is different from the Chevalley-Eilenberg complex of $\glie$. 

We then use this interpretation of Leibniz homology in order to solve the conjecture of T.~Pirashvili about the vanishing of the homology of a certain graded Lie algebra complex associated with a free Leibniz algebra.

The close connection between Leibniz algebras and DG Lie algebras is implicit in the works of J.-L.~Loday and T.~Pirashvili. In particular, in \cite{LP}, they show that Leibniz algebras can be thought of as Lie algebra objects in the category of truncated chain complexes. There is a non-linear analogue of this relationship; namely, the correspondence between augmented racks and cubical monoids, see \cite{Clau, MRacks}.

\section{Definitions}
\subsection{Differential graded Lie algebras} Recall that a {differential graded Lie algebra} is a Lie algebra in the tensor category of chain complexes. Explicitly, it is a graded vector space $$L =\bigoplus_{i\in \mathbb{Z}}\, L_i$$ over a field $k$ of characteristic zero, equipped with a bilinear bracket $\llbracket-,-\rrbracket:\, L_i\otimes L_j\to L_{i+j}$ satisfying the graded antisymmetry
$$\llbracket x,y\rrbracket  = (-1)^{|x||y|+1}\llbracket y,x\rrbracket ,$$
and the graded Jacobi identity:
$$ (-1)^{|x||z|}\llbracket x,\llbracket y,z\rrbracket \rrbracket +(-1)^{|y||x|}\llbracket y,\llbracket z,x\rrbracket \rrbracket +(-1)^{|z||y|}\llbracket z,\llbracket x,y\rrbracket \rrbracket =0,$$
together with a differential $d:L_i\to L_{i-1}$ which satisfies the graded Leibniz rule:
$$d\llbracket x,y\rrbracket =\llbracket dx,y\rrbracket +(-1)^{|x|}\llbracket x,dy\rrbracket .$$
Here $x$, $y$ and $z$ are arbitrary homogeneous elements in $L$. 

The component $L_0$ is a Lie algebra. A DG Lie algebra $L$ is \emph{non-negatively graded} if $L_i=0$ for $i<0$.

\subsection{Leibniz algebras} A left
Leibniz algebra $\g$ over a field $\f$ of characteristic zero is a vector space with a bilinear bracket $[-,-]$ satisfying the left Leibniz identity 
$$[[x, y], z] = [x,[y,z]] - [y, [x, z]]$$ for all $x,y,z\in \g$. The definition of a right Leibniz algebra is similar, with the left  Leibniz identity replaced with the right Leibniz identity
$$ [x,[y,z]]=[[x,y],z]- [[x,z],y].$$
If $\g$ is a left Leibniz algebra, the right Leibniz algebra $\g^{\mathrm{opp}}$ coincides with $\g$ as a vector space and has the bracket
$$[x,y]_{\g^{\mathrm{opp}}}=[y,x]_{\g}.$$
In the same fashion one defines the opposite of a right Leibniz algebra.
We will mostly speak about left Leibniz algebras and omit the term ``left'' when it cannot lead to confusion.

The \emph{kernel} of $\g$ is the ideal $\gann$ linearly spanned by all elements of the form $[x,x]$ with $x\in\g$. Taking the quotient by the kernel amounts to enforcing antisymmetry in $\g$;  the Lie algebra $$\glie:=\g/\gann$$ is the maximal Lie quotient of $\g$. The Lie algebra $\glie$ acts on $\g$ on the left: if $d:\g\to\glie$ denotes the quotient map, then, for each $x\in\g$ we have $$dx\cdot y = [x,y].$$

\section{The Leibniz functor and its adjoints}
\subsection{Enveloping DG Lie algebras of a Leibniz algebra}
For any DG Lie algebra $L$ its degree one part $L_1$ together with the bracket\footnote{called the \emph{derived} bracket; see \cite{KS}.} 
$$[x,y]:=\llbracket dx,y\rrbracket $$ 
is a Leibniz algebra which we denote by $\Leib(L)$. In what follows, we shall restrict our attention to non-negatively graded DG Lie algebras $L$ such that $L_0 = (\Leib(L))_{\mathrm{Lie}}$.  These DG Lie algebras form a category which we denote by $\overline{\mathbf{DGLie}}$; the functor $$\overline{\mathbf{DGLie}} \xrightarrow{\Leib} \mathbf{Leib}$$ to the category of Leibniz algebras will be called the \emph{Leibniz functor}.  Whenever $\g=\Leib(L)$, we shall say that $L$ is an \emph{enveloping DG Lie algebra} of $\g$.

\begin{lemma}
A DG Lie algebra $L$ is  in $\overline{\mathbf{DGLie}}$ if and only if $L_1\to L_0$ is surjective and the kernel of this map coincides with $d \llbracket L_1,L_1\rrbracket $. 
\end{lemma}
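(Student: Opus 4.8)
The plan is to unwind the defining condition of $\overline{\mathbf{DGLie}}$ and reduce it to a single identity of subspaces of $L_1$. Write $\g:=\Leib(L)$, so that $\g=L_1$ as a vector space, with Leibniz bracket $[x,y]=\llbracket dx,y\rrbracket$; the action of $\glie$ on $\g$ is $dx\cdot y=[x,y]$, which is exactly the action of $L_0$ on $L_1$ inside $L$, so the differential $d\colon L_1\to L_0$ is the natural map comparing $L_0$ with the maximal Lie quotient of $\g$. Since $d^2=0$ and $\llbracket z,z\rrbracket=0$ for every $z\in L_0$ (graded antisymmetry, characteristic zero), one has $d\llbracket dx,x\rrbracket=\llbracket dx,dx\rrbracket=0$; hence $d$ annihilates $\gann$ and induces a map $\overline d\colon\glie=\g/\gann\to L_0$. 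Therefore the condition $L_0=(\Leib(L))_{\mathrm{Lie}}$ holds precisely when $\overline d$ is an isomorphism, i.e. when $d\colon L_1\to L_0$ is surjective and $\ker(d\colon L_1\to L_0)=\gann$. It thus remains to prove that $\gann=d\llbracket L_1,L_1\rrbracket$ as subspaces of $L_1$.

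The key computation is that, for $x,y\in L_1$,
\[
d\llbracket x,y\rrbracket=\llbracket dx,y\rrbracket+(-1)^{|x|}\llbracket x,dy\rrbracket=\llbracket dx,y\rrbracket+\llbracket dy,x\rrbracket=[x,y]+[y,x],
\]
where the first equality is the graded Leibniz rule, the second uses $|x|=1$ together with graded antisymmetry (so that $-\llbracket x,dy\rrbracket=\llbracket dy,x\rrbracket$), and the last is the definition of the derived bracket. Thus applying $d$ to $\llbracket L_1,L_1\rrbracket$ produces exactly the span of the symmetrizations $[x,y]+[y,x]$ of the Leibniz bracket.

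Since the ground field has characteristic zero, this already gives the identity. On one hand $d\llbracket x,y\rrbracket=[x,y]+[y,x]=[x+y,x+y]-[x,x]-[y,y]\in\gann$, so $d\llbracket L_1,L_1\rrbracket\subseteq\gann$. On the other hand, taking $y=x$ in the identity above shows that each generator $[x,x]$ of $\gann$ equals $\tfrac12\,d\llbracket x,x\rrbracket\in d\llbracket L_1,L_1\rrbracket$, so $\gann\subseteq d\llbracket L_1,L_1\rrbracket$. Combining this with the first paragraph, $L\in\overline{\mathbf{DGLie}}$ if and only if $d\colon L_1\to L_0$ is surjective with kernel $\gann=d\llbracket L_1,L_1\rrbracket$, which is the assertion of the lemma (with $L$ tacitly taken non-negatively graded, as in the running hypothesis).

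The argument is essentially a computation, so I do not anticipate a genuine obstacle. The one step that deserves care is the first: recognizing that the condition $L_0=(\Leib(L))_{\mathrm{Lie}}$ is witnessed by the differential rather than by some unspecified isomorphism, and noticing that $\gann$ automatically lies in $\ker d$, so that the condition collapses to surjectivity of $d\colon L_1\to L_0$ together with $\ker d=\gann$ — after which the derived-bracket identity finishes the proof.
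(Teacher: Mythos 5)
Your proof is correct and follows essentially the same route as the paper's: the heart of both arguments is the computation $d\llbracket x,y\rrbracket=\llbracket dx,y\rrbracket-\llbracket x,dy\rrbracket=[x,y]+[y,x]$, which identifies $d\llbracket L_1,L_1\rrbracket$ with the kernel $\gann$ of $\Leib(L)$. You merely spell out two points the paper leaves implicit — that the identification $L_0=(\Leib(L))_{\mathrm{Lie}}$ is witnessed by the map induced by $d$, and the characteristic-zero polarization showing the span of the elements $[x,x]$ equals the span of the symmetrizations $[x,y]+[y,x]$ — which is a sound and welcome elaboration, not a different approach.
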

\begin{proof}
Indeed, for $x,y\in L_1$ we have $$d\llbracket x,y\rrbracket  = \llbracket dx, y\rrbracket  - \llbracket x, dy\rrbracket  =  [x, y]_{\Leib(L)} + [y,x]_{\Leib(L)},$$ and, therefore, $d \llbracket L_1,L_1\rrbracket $ coincides with the kernel of the Leibniz algebra  ${\Leib(L)}$. 
\end{proof}

The Leibniz functor has both left and right adjoint functors. The left adjoint functor was defined in \cite{MRacks}. Consider $\g\oplus\glie$ as a chain complex of length two with $\glie$ in degree $0$, $\g$ in degree $1$ and the differential $d:\g\to\glie$ being the quotient map. The free graded Lie algebra on $\g\oplus\glie$ 
is a DG Lie algebra with the differential induced by $d$; 
      let $\E(\g)$ be the quotient of this free DG Lie algebra by the relations
	\begin{equation}\label{relations1}
	\llbracket x, y\rrbracket  = [x,y]_\glie
	\end{equation}
      when $x,y\in \glie$, and 
	\begin{equation}\label{relations2}
	\llbracket x, y\rrbracket  = x\cdot y
	\end{equation}
      when $x\in \glie$ and $y\in \g$. We call $\E(\g)$ the \emph{universal enveloping DG Lie algebra} of $\g$. The following is immediate:
\begin{proposition}
The universal enveloping DG Lie algebra functor is left adjoint to the Leibniz functor.
\end{proposition}

The right adjoint to the Leibniz functor is the \emph{minimal enveloping DG Lie algebra}. It assigns to a Leibniz algebra $\g$ the non-negatively graded 3-term DG Lie algebra $\Malg(\g)$ defined as
$$\ldots\to 0\to 0\to \gann\xrightarrow{i}\g\xrightarrow{d} \glie.$$
Here, $i$ is the inclusion, and the brackets in $\Malg(\g)$ are defined as
$$\llbracket a,b\rrbracket  = [a,b]_{\glie}$$		for $a,b\in\glie$,
$$\llbracket a, x\rrbracket =a\cdot x$$			when $x\in\g$ and $a\in\glie$,
$$i(\llbracket a,x\rrbracket )=a\cdot i(x)$$		for $x\in\gann$ and $a\in\glie$, and
$$\llbracket x,y\rrbracket =[x,y]_\g +[y,x]_\g$$	when $x,y\in\g$. 
It is a straightforward check that $\Malg(\g)$ is, indeed a DG Lie algebra which is a functor of $\g$.
\begin{proposition}
The functor  $\g \mapsto \Malg(\g)$ is right adjoint to the Leibniz functor.
\end{proposition}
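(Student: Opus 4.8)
The plan is to verify the universal property of a right adjoint directly; the counit turns out to be the identity. Note first that $\Leib(\Malg(\g))=\g$: the degree one part of $\Malg(\g)$ is $\g$, and its derived bracket is $\llbracket dx,y\rrbracket=dx\cdot y=[x,y]_\g$. Moreover $\Malg(\g)$ lies in $\overline{\mathbf{DGLie}}$ by the Lemma, since $d\colon\g\to\glie$ is surjective with kernel $\gann$, while $i\llbracket\g,\g\rrbracket$, being spanned by the elements $[x,y]_\g+[y,x]_\g$, is exactly $\gann$. It therefore suffices to show: for every $L\in\overline{\mathbf{DGLie}}$ and every morphism of Leibniz algebras $f\colon\Leib(L)\to\g$, there is a unique morphism of DG Lie algebras $\tilde f\colon L\to\Malg(\g)$ with $\Leib(\tilde f)=f$.

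Since $\Malg(\g)$ is concentrated in degrees $0,1,2$, any such $\tilde f$ has $\tilde f_i=0$ for $i\ge 3$, and $\Leib(\tilde f)=f$ forces $\tilde f_1=f$. Compatibility with the differentials forces $\tilde f_0(dx)=df(x)$ for $x\in L_1$ and $i\,\tilde f_2(z)=f(dz)$ for $z\in L_2$; as $d\colon L_1\to L_0$ is surjective and $i$ is injective, this pins down $\tilde f_0$ and $\tilde f_2$ uniquely \emph{provided they are well defined}, and checking this is the crux. If $dx=0$, then $x\in\ker(d\colon L_1\to L_0)=d\llbracket L_1,L_1\rrbracket$, which by the proof of the Lemma equals the kernel $(\Leib L)^{\mathrm{ann}}$; since $f$ is a morphism of Leibniz algebras it maps $(\Leib L)^{\mathrm{ann}}$ into $\gann=\ker(d\colon\g\to\glie)$, so $df(x)=0$ and $\tilde f_0$ is well defined. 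Likewise, for $z\in L_2$ the element $dz\in L_1$ satisfies $d(dz)=0$, hence $dz\in(\Leib L)^{\mathrm{ann}}$ and $f(dz)\in\gann$, so $\tilde f_2(z):=i^{-1}(f(dz))$ makes sense.

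What remains is to verify that $\tilde f=(\tilde f_0,f,\tilde f_2,0,\dots)$ preserves brackets and commutes with the differentials, which I would organize by the degrees of the arguments. Writing elements of $L_0$ as $du$ with $u\in L_1$ and using $\llbracket du,x\rrbracket=[u,x]_{\Leib L}$, bracket-compatibility on $\llbracket L_0,L_0\rrbracket$ and $\llbracket L_0,L_1\rrbracket$ reduces to the facts that $f$ preserves the Leibniz bracket and that the $\glie$-action on $\g$ is $dx\cdot y=[x,y]_\g$; the identity $d\llbracket x,y\rrbracket=[x,y]_{\Leib L}+[y,x]_{\Leib L}$ from the Lemma, together with the formula $\llbracket x,y\rrbracket_{\Malg(\g)}=[x,y]_\g+[y,x]_\g$, handles $\llbracket L_1,L_1\rrbracket\subseteq L_2$; the case $\llbracket L_0,L_2\rrbracket\subseteq L_2$ follows after applying the injective map $i$; and any bracket landing in degree $\ge 3$ vanishes on both sides because $\Malg(\g)$ vanishes there. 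For the differentials the only new point is $d\colon L_3\to L_2$, where $i\,\tilde f_2(dz)=f(d^2z)=0$ gives $\tilde f_2(dz)=0=d\tilde f_3(z)$. Finally, naturality of the resulting bijection $\psi\mapsto\Leib(\psi)$ between $\Hom_{\overline{\mathbf{DGLie}}}(L,\Malg(\g))$ and $\Hom_{\mathbf{Leib}}(\Leib(L),\g)$ is immediate from functoriality of $\Leib$ and the equality $\Leib(\Malg(g))=g$ for every morphism $g$ of Leibniz algebras. The main obstacle is the well-definedness of the degree-two component $\tilde f_2$: this is exactly where the defining property of $\overline{\mathbf{DGLie}}$---that $\ker(d\colon L_1\to L_0)$ is the Leibniz kernel---is used.
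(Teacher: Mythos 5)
Your proof is correct and takes essentially the same route as the paper: the map $\tilde f$ you construct ($f$ in degree $1$, the induced map on Lie quotients in degree $0$, and $f\circ d_2$ in degree $2$) is exactly the paper's adjoint map, which the paper presents as the composite of the natural map $m\colon L\to \Malg(\Leib(L))$ (identity in degrees $0,1$, $d_2$ in degree $2$) with $\Malg(\phi)$. You simply verify the universal property directly, spelling out the uniqueness, well-definedness and compatibility checks that the paper leaves implicit.
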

\begin{proof}
Consider a DG Lie algebra $L= \ldots\to L_2\xrightarrow{d_2} L_1\to L_0 = L_1/d\llbracket L_1,L_1\rrbracket $. Define the homomorphism $m: L\to \Malg(\Leib(L))$ by setting it to be the identity on $L_1$ and $L_0$ and, in degree 2, to coincide with $d_2: L_2 \to \Leib(L)^{\mathrm{ann}}\subseteq L_1 $. Then, given a Leibniz algebra homomorphism $\phi: \Leib(L)\to\g$ the composition $m\circ \Malg(\phi)$ is the adjoint map. 
\end{proof}
\begin{remark}
In fact, the category of Leibniz algebras is equivalent to the subcategory of the acyclic DG Lie algebras in $\overline{\mathbf{DGLie}}$ which are zero in degrees three and higher, the equivalence being given by the functor $\Malg(\g)$ and the Leibniz functor.
\end{remark}
\begin{remark}
It is not hard to see that while the Leibniz functor could be defined on the category $\mathbf{DGLie}$ of \emph{all} DG Lie algebras, it would fail to have the right adjoint there. In particular, given a homomorphism $\Leib(L)\to \g$ there is no canonical way to define the action of the Lie algebra $L_0$ on $\g$ which is part of the structure of an adjoint functor. This can be resolved by choosing $L_0$ to coincide with $(\Leib(L))_{\mathrm{Lie}}$, that is, considering the category $\overline{\mathbf{DGLie}}$.
\end{remark}
\begin{remark}\label{init}
The terms ``universal enveloping'' and ``minimal enveloping'' for the DG Lie algebras  $\E(\g)$ and $\Malg(\g)$ reflect the fact that, in the category of the enveloping DG Lie algebras of a given Leibniz algebra $\g$, the DG Lie algebra $\E(\g)$ is the initial, and $\Malg(\g)$ is the terminal object.
\end{remark}

%%%%%%%%%%%%%%%%%%%%%%%%%%%%%%

\subsection{Representations of Leibniz algebras as DG modules}

A chain complex $M$ is a (DG) module over a DG Lie algebra $L$ if it equipped with a degree zero bracket
$$\llbracket -,-\rrbracket: L\otimes M\to M,$$
which satisifes
$$ \llbracket\llbracket x,y\rrbracket,m\rrbracket =\llbracket x,\llbracket y,m\rrbracket\rrbracket-(-1)^{|y||x|}\llbracket y,\llbracket x,m\rrbracket\rrbracket$$
and
$$d\llbracket x,m\rrbracket=\llbracket dx,m\rrbracket+(-1)^{|x|}\llbracket x,dm\rrbracket.$$
Here $x$ and $y$ are arbitrary homogeneous elements in $L$ and $m$ is a homogeneous element of $M$.

This is the definition of a \emph{left} module; any left module over a DG Lie algebra is also a right module with the bracket
$$\llbracket m,x \rrbracket = (-1)^{|x||m|+1}\llbracket x,m\rrbracket.$$

A representation of a left Leibniz algebra $\g$ is a vector space $\m$ equipped with bilinear brackets $\g\otimes\m\to\m$ and $\m\otimes\g\to\m$
satisfying 
$$[[m, x], y] = [m,[x,y]] - [x, [m, y]],$$
$$[[x, m], y] = [x,[m,y]] - [m, [x, y]],$$
$$[[x, y], m] = [x,[y,m]] - [y, [x, m]],$$
where $x,y\in\g$ and $m\in \m$.

A representation $\m$ of a right Leibniz algebra $\g$ is defined in the similar way; instead of the left, it satisfies the right Leibniz identity where two of the arguments lie in $\g$ and one lies in $\m$. Loday and Pirashvili use the term ``co-representation'' for a representation of the opposite Leibniz algebra. Given a representation $\m$ of $\g$, the representation $\m^{\mathrm{opp}}$ of $\g^{\mathrm{opp}}$ is defined as the same vector space as $\m$ with the brackets
$[x,m]_{\m^{\mathrm{opp}}}=[m,x]_{\m}$
and
$[m,x]_{\m^{\mathrm{opp}}}=[x,m]_{\m}$.

For any $L$-module $M$, the space $M_i$ becomes a representation of  $\Leib(L)$ if we define 
\begin{equation}\label{moda}[x,m]=\llbracket dx, m\rrbracket\end{equation}
and
\begin{equation}\label{modb}[m,x]= - \llbracket x, dm \rrbracket\end{equation}
for $x\in L_1$ and $m\in M_i$. 

\medskip

Given a representation $\m$ of a Leibniz algebra $\g$, denote by $\mann\subseteq\m$ the subspace spanned by all the expressions of the form $[x,m]+[m,x]$ and let $\mlie$ be the quotient $\m/\mann$. Consider the quotient map $d:\m\to\mlie$ as the differential in the DG vector space whose only non-trivial components are $\m$ in degree zero and $\mlie$ in degree $-1$. 

Define $\E(\m)$ as the quotient of the free $\E(\g)$-module generated by the DG vector space $\m\to\mlie$ by the relations (\ref{moda}) and (\ref{modb}).  As a graded (not DG) vector space, $\overline{\E(\m)}:=\E(\m)_{\geq 0}$ coincides with the free module over the free graded Lie algebra generated by $\g$ in degree 1. In particular, for $i\geq 0$ we have $$\E_i(\m) = \m\otimes \g^{\otimes i}.$$
The differential $\E_0(\m)\to\E_{-1}(\m)$ coincides with $\m\to\mlie$.

Consider the DG vector space
$$\M(\m):= \ldots\to 0\to \mann \xrightarrow{i} \m\xrightarrow{d}\mlie\to 0 \to\ldots $$
with $\m$ in degree zero, $i$ the inclusion and $d$  the quotient map.

\begin{proposition}
The DG vector space $\M(\m)$ has a natural structure of an $\Malg(\g)$-module. 
\end{proposition}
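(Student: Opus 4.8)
The plan is to exhibit explicitly the three brackets that make up the $\Malg(\g)$-module structure on $\M(\m)$ and then verify the module axioms degree by degree, reducing everything to the Leibniz-representation identities for $\m$. Recall that $\Malg(\g)$ is concentrated in degrees $0,1,2$ (namely $\glie$, $\g$, $\gann$) and $\M(\m)$ is concentrated in degrees $-1,0,1$ (namely $\mlie$, $\m$, $\mann$). Since the module bracket has degree zero, the only potentially nonzero components $\llbracket L_i, M_j\rrbracket \to M_{i+j}$ are: $\glie\otimes\mlie\to\mlie$, $\glie\otimes\m\to\m$, $\glie\otimes\mann\to\mann$, $\g\otimes\mlie\to\m$, and $\g\otimes\m\to\mann$. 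I would define the first as the induced action of $\glie$ on $\mlie$ (well defined because $\gann$ and $\mann$ act trivially on the relevant quotient, which itself needs a short check); the second as the given action $[x,m]$ of $\g$ on $\m$ pulled back along $\g\to\glie$ --- wait, it must be defined directly by $\llbracket dx, m\rrbracket := [x,m]$, which requires that $[\gann,\m]=0$, again a consequence of the representation identities; the third as the induced action on $\mann$; the fourth by $\llbracket x, \bar m\rrbracket := $ any lift composed with $[x,-]$, well defined modulo $\mann$-issues; and the fifth by $\llbracket x,m\rrbracket := [x,m]+[m,x]\in\mann$, mirroring the bracket $\llbracket x,y\rrbracket = [x,y]_\g+[y,x]_\g$ on $\g$ inside $\Malg(\g)$.

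Next I would check the two structural axioms. The Leibniz (derivation) rule $d\llbracket \xi, \mu\rrbracket = \llbracket d\xi,\mu\rrbracket + (-1)^{|\xi|}\llbracket \xi, d\mu\rrbracket$ has to be checked on each bidegree; the only nontrivial instances involve $\xi\in\g=L_1$, where $d\xi = dx\in\glie$, and they amount to compatibility of the five brackets above with the two differentials $\m\to\mlie$ and $\mann\to\m$. For example with $\xi=x\in\g$ and $\mu=m\in\m$ one gets $d\bigl([x,m]+[m,x]\bigr) = \llbracket dx, m\rrbracket - \llbracket x, dm\rrbracket$ inside $\m$, which is exactly $[x,m]-(-[x,\bar{\bar m}]) $ and holds because $[m,x]$ maps to zero in $\mlie$; and similar bookkeeping covers the remaining cases. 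The Jacobi-type axiom $\llbracket\llbracket\xi,\eta\rrbracket,\mu\rrbracket = \llbracket\xi,\llbracket\eta,\mu\rrbracket\rrbracket - (-1)^{|\xi||\eta|}\llbracket\eta,\llbracket\xi,\mu\rrbracket\rrbracket$ must be verified for all choices of $\xi,\eta\in\Malg(\g)$ and $\mu\in\M(\m)$ with total degree landing in $\{-1,0,1\}$; when all three lie in the lowest relevant degrees this is the Lie-module axiom for $\mlie$ over $\glie$, and in the remaining finitely many bidegrees it reduces, after unwinding the definitions above, to one of the three Leibniz-representation identities displayed in the text (for the two arguments in $\g$ and one in $\m$), together with the sign $(-1)^{|x||y|}=-1$ when $|x|=|y|=1$ producing the correct signs.

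The main obstacle I expect is not any single identity but the well-definedness of the brackets on the quotients $\mlie$ and $\mann$ (and the subspace $\mann\hookrightarrow\m$): one must check that $\glie$ preserves $\mann\subseteq\m$, that $\gann$ annihilates the quotient data appropriately, and that the bracket $\g\otimes\mlie\to\m$ does not depend on the chosen lift --- each of these is a direct consequence of the three representation identities, but they are the places where the argument could silently go wrong. Once well-definedness is secured, the axiom checks are a finite, mechanical case analysis that I would organize in a table indexed by bidegree, citing in each cell the relevant one of the two DG-module axioms' reductions to the corresponding Leibniz-algebra representation identity. Functoriality in $\m$ is then immediate since every bracket was defined by a formula natural in $\m$.
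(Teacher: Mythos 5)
Your overall strategy---writing out the degree components of the bracket explicitly and reducing the axioms to the three representation identities---is the same as the paper's, but two pieces of your construction are wrong or missing, and they are exactly the places where the content of the proposition lies. First, the component $\g\otimes\mlie\to\m$ cannot be ``any lift composed with $[x,-]$''. That recipe is not well defined with values in $\m$: the action of $\g$ does not annihilate $\mann$, it only preserves it (the representation identities give $[x,[y,m]+[m,y]]\in\mann$, not $=0$), so $[x,m]$ genuinely depends on the lift of $dm$. Moreover the Leibniz rule in bidegree $(1,0)$ is an identity in $\m$ itself, since the differential $\mann\to\m$ is the inclusion: it reads $[x,m]+[m,x]=\llbracket dx,m\rrbracket-\llbracket x,dm\rrbracket=[x,m]-\llbracket x,dm\rrbracket$, which forces $\llbracket x,dm\rrbracket=-[m,x]$. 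Your sample verification of this bidegree, justified ``because $[m,x]$ maps to zero in $\mlie$'', only checks the equation modulo $\mann$ and so does not establish the axiom; with your definition the axiom fails. The correct formula $-[m,x]$ is well defined precisely because adding the first two representation identities yields $[[y,m]+[m,y],x]=0$, i.e.\ $[\mann,\g]=0$---an identity you never isolate, and the one on which the whole construction (as in the paper) rests.

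Second, your enumeration of possibly nonzero components omits $\gann\otimes\mlie\to\mann$ (bidegree $(2,-1)$, total degree $1$). This component must be specified: it is forced by the module axiom applied to $\llbracket\llbracket x,y\rrbracket,dm\rrbracket$ with $x,y\in\g$, and the paper defines it by $\llbracket x,y\rrbracket\otimes dm\mapsto -[m,[x,y]+[y,x]]$; checking that this lands in $\mann$ and is independent of the lift of $dm$ again uses $[\mann,\g]=0$ together with the first representation identity. Once the formula for $\g\otimes\mlie\to\m$ is corrected and this degree-two component is added, the remaining degreewise verification is the routine check that the paper also leaves to the reader, and your well-definedness remarks about the $\glie$-actions on $\m$, $\mann$ and $\mlie$ are fine.
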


\begin{proof}
Let us construct an explicit action of an arbitrary enveloping DG Lie algebra $L$ of $\g$  on $\M(\m)$.

The last of the three conditions satisfied by a representation implies that $$[[x,y]+[y,x],m] =0$$ and therefore, the bracket $\g\otimes\m\to\m$ descends to a Lie algebra action $\glie\otimes\m\to\m$. This action preserves $\mann$ since
$$[x,[y,m]]+[x,[m,y]] = [[x,y],m]+[m,[x,y]]+ [y,[x,m]]+[[x,m],y]$$
and, therefore, descends to an action of $\glie$ on $\mlie$. These three actions together give the action of $L_0$ on $\M(\m)$.

Define the maps $L_1\otimes \M_i(\m)\to \M_{i+1}(\m)$, $i=-1,0$ by
$$x\otimes dm\mapsto - [m,x],$$
$$x\otimes m\mapsto [x,m]+[m,x]$$
and the map $L_2\otimes \M_{-1}(\m)\to \M_{1}(\m)$ by
$$\llbracket x,y \rrbracket\otimes dm\mapsto -[m, [x,y]+[y,x]].$$
Verifying that $\M(\m)$ is indeed an $L$-module is straightforward.
\end{proof}

%Modules over DG Lie algebras in $\overline{\mathbf{DGLie}}$ can be truncated from below

\begin{remark}
The choice of grading on a DG module over a DG Lie algebra $L$ is arbitrary, since a shift in grading produces another DG module over $L$. Note that a certain ambiguity in our notation: if $\g$ is considered as a representation of itself, the DG module $\M(\g)$ is obtained from the DG Lie algebra $\Malg(\g)$ by shifting the grading down by one. 
\end{remark}

%%%%%%%%%%%%%%%%%%%%%%%%%%%%%%

\section{Leibniz algebra homology and cohomology via enveloping DG Lie algebras}

Homology and cohomology for right Leibniz algebras were defined by Loday; 
see \cite{Loday, LP0}\footnote{note that in these standard references, there is a minor 
error in the formula for the differential in cohomology: in the last term, $n$ should be $n+1$, see \cite[page 138]{Loday} and  \cite[page 144]{LP0}}. Here, we are concerned with 
left Leibniz algebras; by the cohomology $HL^*(\g,\m)$ of a left Leibniz algebra $\g$ with coefficients in a representation $\m$ we understand the 
cohomology of the right Leibniz algebra $\g^{\mathrm{opp}}$, with coefficients in $\m^{\mathrm{opp}}$; similarly, the homology $HL_*(\g,\m)$ of $\g$ is the homology of $\g^{\mathrm{opp}}$ with coefficients in $\m$.

\subsection{The Chevalley-Eilenberg complex of a Leibniz algebra}

For any Leibniz algebra $\g$, its minimal enveloping DG Lie algebra $\Malg(\g)$ is acyclic.  As a consequence, the universal enveloping algebra $U(\Malg(\g))$ has no homology in positive dimensions (see Proposition~2.1 of \cite[Appendix B]{Q}). It can be seen that the graded components of $U(\Malg(\g))$ are free as left $U(\glie)$-modules. Therefore, $U(\Malg(\g))$ is a free resolution of the base field $k$ considered as a trivial $U(\glie)$-module; we call it the \emph{Chevalley-Eilenberg complex} of the Leibniz algebra $\g$. When $\g$ is a Lie algebra, $\Malg(\g)$ is the cone on $\g$ and 
$U(\Malg(\g))$ is the usual Chevalley-Eilenberg complex.

\begin{proposition}
Let $\m$ be a module over the Lie algebra $\glie$. The chain complex $\m\otimes U(\Malg(\g))$ calculates the homology, and the cochain complex
$\mathrm{Hom}(U(\Malg(\g)), \m)$ the cohomology of the maximal Lie quotient $\glie$ with coefficients in $\m$. Here, the tensor product and $\mathrm{Hom}$ are taken in the category of $\glie$-modules.
\end{proposition}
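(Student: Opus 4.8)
The plan is to leverage the two facts already established about $U(\Malg(\g))$: that it has no homology in positive dimensions (since $\Malg(\g)$ is acyclic), and that its graded components are free as left $U(\glie)$-modules. Together these say precisely that $U(\Malg(\g))$, viewed as a complex of left $U(\glie)$-modules, is a projective resolution of $k$ with its trivial $\glie$-action. Once this is in hand, the statement is essentially the definition of $\mathrm{Tor}$ and $\mathrm{Ext}$ over $U(\glie)$, together with the standard identification of Lie algebra (co)homology with these derived functors: $H_*(\glie,\m) = \mathrm{Tor}^{U(\glie)}_*(k,\m)$ and $H^*(\glie,\m) = \mathrm{Ext}_{U(\glie)}^*(k,\m)$.

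First I would make the resolution claim precise. The augmentation $U(\Malg(\g)) \to k$ comes from the projection onto $U(\Malg(\g)_0) = U(\glie)$ followed by the counit $U(\glie)\to k$; one checks it is a chain map and a quasi-isomorphism, which is exactly the ``no homology in positive dimensions'' statement reinterpreted (the degree-zero homology being $k$ because $\Malg(\g)$ is connected in the appropriate sense, i.e.\ $\Malg(\g)_0 = \glie$ and the differential out of degree $1$ has image the augmentation ideal's worth of relations). Freeness of each $U(\Malg(\g))_i$ over $U(\glie)$ — already asserted in the text — then makes this an honest free (hence projective) resolution of $k$ in the category of left $U(\glie)$-modules.

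Next I would compute. For homology, $\m\otimes_{U(\glie)} U(\Malg(\g))$ is, by definition, a complex whose homology is $\mathrm{Tor}^{U(\glie)}_*(\m,k)$; since these Tor groups are computed by \emph{any} projective resolution of either variable, this agrees with what one gets from the standard (Chevalley--Eilenberg) resolution of $k$, namely $H_*(\glie,\m)$. Dually, $\mathrm{Hom}_{U(\glie)}(U(\Malg(\g)),\m)$ computes $\mathrm{Ext}^*_{U(\glie)}(k,\m) = H^*(\glie,\m)$. The only genuine point requiring care is making sure the boundary maps match the sign and indexing conventions of Lie algebra (co)homology, but since we are invoking the uniqueness of derived functors rather than comparing complexes term by term, this is automatic up to canonical isomorphism.

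The main obstacle, then, is not really an obstacle: everything rests on the two properties of $U(\Malg(\g))$ quoted from the literature (acyclicity of $\Malg(\g)$ plus Quillen's Proposition, and freeness of the graded pieces over $U(\glie)$). If one wanted a self-contained argument, the work would be in verifying freeness — this follows from a Poincar\'e--Birkhoff--Witt type decomposition of $U(\Malg(\g))$ as $U(\glie)\otimes \Lambda(\g[1])\otimes\cdots$ reflecting the splitting of $\Malg(\g)$ as a graded (non-DG) vector space — but as the excerpt already grants this, the proof reduces to the homological-algebra bookkeeping above.
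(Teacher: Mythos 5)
Your proposal is correct and follows the paper's own route: both arguments exhibit $U(\Malg(\g))$ as a free resolution of $k$ over $U(\glie)$ by combining Quillen's acyclicity result for $U$ of the acyclic DG Lie algebra $\Malg(\g)$ with the PBW-type splitting $U(\Malg(\g))\cong U(\glie)\otimes U(\Malg(\g)_{>0})$ giving freeness of the graded components, and then conclude via the standard $\mathrm{Tor}/\mathrm{Ext}$ description of Lie algebra (co)homology.
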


\begin{proof} 
It only remains to see that the graded components of $U(\Malg(\g))$ are free as left $U(\glie)$-modules. Indeed, $U(\Malg(\g))$, as a graded 
left $U(\glie)$-module, is isomorphic to $U(\glie)\otimes U(\Malg(\g)_{>0})$, where $\Malg(\g)_{>0}$ is the graded Lie algebra obtained from $\Malg(\g)$ by setting the zero degree component to be trivial. Therefore, each graded component of $U(\Malg(\g))$ is free.
\end{proof}

\subsection{Leibniz homology and cohomology with coefficients in a Lie algebra representation} 

One can generalise the definition of the Chevalley-Eilenberg complex by replacing the minimal enveloping DG Lie algebra $\Malg(\g)$ with an arbitrary enveloping DG Lie algebra functor. The resulting  complex will not necessarily be acyclic, of course; however, it may produce interesting functors of $\g$ instead of the usual homology and cohomology. In particular, one can consider the universal enveloping DG Lie algebra $\E(\g)$.

\begin{proposition}
Let $\m$ be a module over the Lie algebra $\glie$. The chain complex $\m\otimes U(\E(\g))$ calculates the Leibniz homology, and the cochain complex
$\mathrm{Hom}(U(\E(\g)), \m)$ the Leibniz cohomology of $\g$ with coefficients in $\m$. Here, the tensor product and $\mathrm{Hom}$ are taken in the category of $\glie$-modules.
\end{proposition}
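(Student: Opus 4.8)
The strategy is to show that $U(\E(\g))$, equipped with the augmentation to $k$, is isomorphic as a DG $U(\glie)$-module to the standard complex computing Leibniz homology, namely the complex with $n$-th term $\g^{\otimes n}$ (with an extra copy of $k$ or $\glie$ at the bottom, matching the normalization already recorded in the excerpt). Since we are told that, as a graded vector space, $\overline{\E(\g)} = \E(\g)_{\ge 0}$ is the free graded Lie algebra on $\g$ placed in degree $1$, the Poincar\'e--Birkhoff--Witt theorem for free graded Lie algebras gives that $U$ of it is the free associative algebra on $\g$ in degree $1$; tensoring in the degree-zero part $U(\glie)$ (using that $\E(\g)$ is, as a graded $U(\glie)$-module, $U(\glie)\otimes U(\overline{\E(\g)}_{>0})$, exactly as in the proof of the previous proposition), we get that the degree-$n$ component of $U(\E(\g))$ is $U(\glie)\otimes \g^{\otimes n}$, i.e.\ the free $U(\glie)$-module on $\g^{\otimes n}$. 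Hence $\m\otimes_{U(\glie)} U(\E(\g))$ has underlying graded vector space $\bigoplus_n \m\otimes\g^{\otimes n}$, which is exactly the underlying space of the Loday complex.

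\textbf{The differential.} The heart of the argument is to check that the differential induced on $U(\E(\g))$ by $d\colon \E(\g)_1 = \g \oplus \glie \to \E(\g)_0 = \glie$ agrees, after the identification above, with Loday's boundary map
$$
d(x_1\otimes\cdots\otimes x_n) = \sum_{1\le i<j\le n} (-1)^{j} x_1\otimes\cdots\otimes x_i\otimes\cdots\otimes[x_i,x_j]\otimes\cdots\otimes \widehat{x_j}\otimes\cdots\otimes x_n
$$
(with the appropriate sign and indexing conventions for the $\g^{\mathrm{opp}}$/left-versus-right bookkeeping described earlier in the paper). To see this, I would compute $d$ on a product $y_1\cdots y_n$ of degree-one generators in $U(\E(\g))$ using the graded Leibniz rule for the differential on a DG associative algebra, $d(ab)=(da)b+(-1)^{|a|}a(db)$. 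Each term $y_1\cdots (dy_i)\cdots y_n$ has $dy_i\in\glie$ sitting in the middle; one then moves this degree-zero element to the left past $y_1,\dots,y_{i-1}$ using the commutation relations in $U(\E(\g))$, which are precisely the brackets $\llbracket dy_i, y_k\rrbracket = (dy_i)\cdot y_k = [y_i,y_k]_\g$ imposed by relation (\ref{relations2}), together with $\llbracket dy_i, dy_k\rrbracket = [dy_i,dy_k]_{\glie}$ from (\ref{relations1}). Iterating the commutator expansion produces exactly a sum of terms in which one tensor slot has been replaced by a bracket and $y_i$ has been absorbed — reproducing Loday's formula, with the bottom map $\E_0\to\E_{-1}$, i.e.\ $\g\oplus\glie\to\glie$ restricted appropriately, recovering the last boundary $\g\to\glie$ (or $\g\to k$ after tensoring with trivial $\m$). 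The cochain statement follows by dualizing in the category of $\glie$-modules, using that each $U(\E_n(\g))$ is $U(\glie)$-free so $\Hom_{U(\glie)}(-,\m)$ is exact on this complex.

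\textbf{The main obstacle.} The genuine work is bookkeeping: matching signs and the ordering conventions between the DG-associative-algebra differential (with its Koszul signs coming from $|y_i|=1$) and Loday's combinatorial boundary, and correctly tracking the passage to the opposite algebra $\g^{\mathrm{opp}}$ and $\m^{\mathrm{opp}}$ built into the definition of $HL_*(\g,\m)$ for left Leibniz algebras. A clean way to organize this is to first treat the universal case $\m = U(\glie)$ (equivalently, prove the isomorphism of complexes at the level of $U(\E(\g))$ itself with its induced differential), and only afterwards apply $\m\otimes_{U(\glie)}-$; this isolates all the sign juggling into a single identification of chain complexes and makes the comparison with \cite{Loday, LP0} transparent. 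One should also check the low-degree edge: that $\E_{-1}(\g) = \glie$ and the map $\E_0\to\E_{-1}$ account for the truncation in Loday's complex (where the chain groups are $\g^{\otimes n}$ for $n\ge 1$ and the augmentation target is $k$), so that after tensoring with a $\glie$-module $\m$ one gets precisely $\cdots\to\m\otimes\g\otimes\g\to\m\otimes\g\to\m$ computing $HL_*(\g,\m)$. Once the differential is identified, there is nothing left to prove.
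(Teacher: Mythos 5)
Your proposal follows essentially the same route as the paper: identify $U(\E(\g))$ with $U(\glie)\otimes_k T(\g)$ as a left $U(\glie)$-module (the paper does this directly from the presentation $ax-xa=a\cdot x$ rather than via PBW, but this is the same observation), then recover Loday's boundary by pushing each $d(x_j)\in\glie$ leftward through the commutation relations, and finally apply $\m\otimes_{\glie}-$ and $\Hom_{\glie}(-,\m)$ using $U(\glie)$-freeness. Only trivial slips to fix: $\E(\g)_1=\g$ (not $\g\oplus\glie$) and $\E(\g)$ has no degree $-1$ component, so the bottom of the complex is simply $\m\otimes\g\to\m\otimes_{\glie}U(\glie)=\m$; otherwise the argument matches the paper's.
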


\begin{proof}

The universal enveloping algebra $U(\E(\g))$ is isomorphic, as a left $U(\glie)$-module, to 
$$U(\glie) \otimes_k T(\g).$$
Indeed, it is the tensor algebra on $\g\oplus\glie$ modulo the relations $a x - x a = a \cdot x$ for $a\in \glie$ and $x\in\g$. Here, we write the product in $T(\g)$ simply as juxtaposition. Therefore, as left $U(\glie)$-modules,
$$\m\otimes_{\glie} U(\E(\g))=\m\otimes_k T(\g).$$

The differential in $\m\otimes_k T(\g)$ is induced by $d:\g\to\glie$; in particular, for $u\in \m$ and $x_1, \ldots, x_n\in \g$ we have
\begin{multline*}
d(u\otimes  x_1 x_2 \ldots x_n) = u\otimes d(x_1) x_2\ldots x_n 
- u \otimes x_1 d(x_2) \ldots x_n+ \ldots + (-1)^{n+1} u  \otimes x_1 x_2\ldots d(x_n) \\
= \sum_{1 \leq i<j \leq n} (-1)^j u \otimes x_1\ldots \bigl(d(x_j)\cdot x_i\bigr) \ldots \widehat{x}_j\ldots x_n + 
\sum_{1 \leq j \leq n} (-1)^{j+1} ud(x_j) \otimes x_1\ldots \widehat{x}_j\ldots x_n.
\end{multline*}

Taking into the account the fact that $d(x_j)\cdot x_i = [x_j, x_i]_\g$, we see that the differential in $\m\otimes_k T(\g)$ 
satisfies
$$d(m \otimes  x_1 x_2 \ldots x_n) = \sum_{1 \leq i<j \leq n} (-1)^j m \otimes x_1\ldots [x_j, x_i] \ldots \widehat{x}_j\ldots x_n + 
\sum_{1 \leq j \leq n} (-1)^{j+1} [m, x_j] \otimes x_1\ldots \widehat{x}_j\ldots x_n.$$
Changing the sign, we arrive to the standard complex for the Leibniz homology of the opposite Leibniz algebra $\g^{\mathrm{opp}}$.

The space $\mathrm{Hom}_{\glie} (U(\glie) \otimes_k T(\g),\m)$ in degree $n$ coincides with the space of all linear maps $\g^{\otimes n}\to\m$.
The differential in the cohomology complex $\mathrm{Hom} (U(\glie) \otimes_k T(\g),\m)$ satisfies 
\begin{equation}\label{leibcomplex}
(df)(x_1,\ldots, x_n)= \sum_{1 \leq i<j \leq n} (-1)^j f(x_1,\ldots, [x_j, x_i], \ldots \widehat{x}_j\ldots x_n) + 
\sum_{1 \leq j \leq n} (-1)^{j+1} [x_j, f(x_1\ldots \widehat{x}_j\ldots x_n)],
\end{equation}
which is equivalent to the differential for the Leibniz cohomology of $\g^{\mathrm{opp}}$.
\end{proof}

\subsection{Leibniz homology and cohomology with coefficients in a Leibniz algebra representation}

Let $L$ be a non-negatively graded DG Lie algebra and $M$ a DG module over $L$. Write $\overline{M}$ for the chain complex obtained from $M$ by replacing each $M_i$ with $i<0$ by zero. Clearly, $\overline{M}$ may fail to be a DG module over $L$; however, it is a graded module over $L$ considered as a graded (not DG) Lie algebra.

In order to extend the observations of the previous subsections to the more general case of coefficients in a Leibniz algebra representation, we replace the module $\m$ by $\overline{\E(\m)}$ or $\overline{\M(\m)}$ and the category of $\g$-modules by the category of graded, although not differential graded, $\E(\g)$-modules. 

\begin{proposition}
Let $\m$ be a representation of a Leibniz algebra $\g$. The chain complex $\overline{\E(\m)}\otimes U(\E(\g))$ calculates the Leibniz homology, and the cochain complex
$\mathrm{Hom}(U(\E(\g)), \overline{\M(\m)})$ the Leibniz cohomology of $\g$ with coefficients in $\m$. Here, the tensor product and $\mathrm{Hom}$ are taken in the category of graded $\E(\g)$-modules.
\end{proposition}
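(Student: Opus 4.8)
The plan is to reduce this proposition to the two already-proved special cases (trivial Lie-module coefficients and Lie-algebra-representation coefficients) by analyzing the graded $\E(\g)$-module structure of $U(\E(\g))$ and of the coefficient modules $\overline{\E(\m)}$ and $\overline{\M(\m)}$ as free modules over the free graded Lie algebra on $\g$ in degree $1$. The key structural input is the identification, already used in the previous proof, of $U(\E(\g))$ as $U(\glie)\otimes_k T(\g)$ as a left $U(\glie)$-module, together with the description of $\overline{\E(\m)}$ given in the text: $\E_i(\m)=\m\otimes\g^{\otimes i}$ for $i\ge 0$, with the differential in degree $0$ equal to $d\colon\m\to\mlie$ extended by the graded Leibniz rule over the free graded Lie algebra.

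First I would treat the homology statement. The tensor product $\overline{\E(\m)}\otimes_{\E(\g)}U(\E(\g))$ is taken over the graded Lie algebra $\E(\g)$; since $U(\E(\g))\cong U(\glie)\otimes_k T(\g)$ with the generators $\g$ in degree $1$ acting freely on the $T(\g)$-factor, tensoring $\overline{\E(\m)}$ over $\E(\g)$ with $U(\E(\g))$ collapses, exactly as in the coefficients-in-a-Lie-representation case, to $\m\otimes_k T(\g)\otimes_k(\text{something recording }\mlie\text{ and the higher terms})$. More precisely, I would argue that as a complex this tensor product is isomorphic to $\overline{\E(\m)}\otimes_k T(\g)$ with a differential built from (i) the internal differential of $\overline{\E(\m)}$ (the map $\m\to\mlie$ and its Leibniz-rule consequences), and (ii) the differential of $U(\E(\g))$ induced by $d\colon\g\to\glie$, i.e.\ the same combinatorial differential $\sum(-1)^j(\ldots[x_j,x_i]\ldots)+\sum(-1)^{j+1}[x_j,-]$ appearing in the previous proof, now with brackets landing in $\overline{\E(\m)}$. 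I would then identify the resulting total complex with Loday's complex $CL_*(\g^{\mathrm{opp}},\m)$ computing Leibniz homology with coefficients in the representation $\m$, using the formulas \eqref{moda} and \eqref{modb} that translate the DG-module action into the left/right Leibniz brackets, and the change-of-sign conventions from the trivial-coefficients argument. For cohomology I would run the dual computation: $\mathrm{Hom}_{\E(\g)}(U(\E(\g)),\overline{\M(\m)})$ in degree $n$ is, by the freeness of $U(\E(\g))$ over the graded Lie algebra on $\g$, the space of linear maps $\g^{\otimes n}\to\overline{\M(\m)}$, and the induced differential is the one in \eqref{leibcomplex} but now with values in $\overline{\M(\m)}=(\mlie\leftarrow\m\leftarrow\mann)$, assembling (after totalization with the internal differential of $\overline{\M(\m)}$) to Loday's Leibniz cochain complex with coefficients in $\m$.

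The main obstacle I anticipate is the careful bookkeeping that makes the two halves of the total differential — the ``external'' one coming from $d$ on $U(\E(\g))$ and the ``internal'' one coming from the differential of the coefficient complex $\overline{\E(\m)}$ (resp.\ $\overline{\M(\m)}$) — fit together with consistent signs into a single complex isomorphic to Loday's. In particular one must check that passing from $M$ to $\overline{M}$ (truncating negative degrees), which destroys the DG-module structure but keeps the graded-module structure, does not lose information: the point is that the only negative-degree piece that matters, $\mlie$ in degree $-1$ for $\overline{\M(\m)}$ and, dually, the extra generators producing $\gann$ for $\overline{\E(\m)}$, is precisely accounted for by the degree-$0$ part of the differential of $\E(\g)$ acting on these modules, so that the truncated complex computes the same thing as one would get from a genuine resolution. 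Concretely I would verify this by exhibiting, as in the proof for $\M(\m)$ being an $\Malg(\g)$-module, the explicit action maps and checking compatibility with $d$ in the low degrees where truncation occurs, and otherwise the argument is a degree-by-degree comparison with \eqref{leibcomplex} and its homological analogue, with the sign twist coming from the passage to $\g^{\mathrm{opp}}$ as in the two previous propositions.
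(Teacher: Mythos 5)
Your overall strategy (identify the underlying graded spaces via freeness and then compare the induced differential with Loday's) is the same as the paper's, but the two concrete identifications you assert are the crux of the proof and both are off. For the homology half: the tensor product is taken over the whole graded Lie algebra $\E(\g)$, and $U(\E(\g))$ is the free rank-one module over itself, so $\overline{\E(\m)}\otimes_{\E(\g)}U(\E(\g))$ collapses to $\overline{\E(\m)}$ itself, i.e.\ to $\m\otimes_k T(\g)$ (equivalently, since $\overline{\E(\m)}$ is the free module on $\m$ over the free graded Lie algebra $\E_{>0}(\g)$, it is the module induced from the $\glie$-module $\m$, and the $T(\g)$-factor of $U(\E(\g))$ is absorbed). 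Your ``more precise'' claim that the complex is $\overline{\E(\m)}\otimes_k T(\g)$ is what one would get by tensoring only over $\glie=\E_0(\g)$; its degree-$n$ part is $\bigoplus_{i+j=n}\m\otimes\g^{\otimes i}\otimes\g^{\otimes j}$, which is strictly larger than Loday's $CL_n=\m\otimes\g^{\otimes n}$, so the ``identification with Loday's complex'' you propose cannot be carried out (at best one could hope for a quasi-isomorphism, which you neither state nor prove). The collapse you need comes from the freeness of $\overline{\E(\m)}$ over $\E_{>0}(\g)$ on the generating space $\m$ --- exactly the fact the paper invokes --- not from the freeness of $U(\E(\g))$ over $T(\g)$.

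The cohomology half has the same kind of over-count. A degree $-n$ morphism of graded $\E(\g)$-modules $U(\E(\g))\to\overline{\M(\m)}$ is \emph{not} an arbitrary linear map $\g^{\otimes n}\to\overline{\M(\m)}$: because the Hom is over all of $\E(\g)$ and $\overline{\M(\m)}$ is generated in degree zero by $\m$, such a morphism is determined by its component $f_n\colon\g^{\otimes n}\to\m$, while the component $\g^{\otimes(n+1)}\to\mann$ in degree one is forced by equivariance with respect to the degree-one part $\g\subset\E(\g)$ (via $x\otimes m\mapsto[x,m]+[m,x]$). Loday's correction term $[x_1,f_n(x_2,\dots,x_{n+1})]+[f_n(x_2,\dots,x_{n+1}),x_1]$ then appears by applying the differential of $\overline{\M(\m)}$ to this forced component; it is not obtained by ``totalizing'' an independent $\mann$-valued summand, which, taken literally, would again produce a complex bigger than Loday's $\Hom_k(\g^{\otimes n},\m)$. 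So the anticipated difficulty is not sign bookkeeping or the truncation per se, but getting the module category right: over $\E(\g)$ the coefficient objects $\overline{\E(\m)}$ and $\overline{\M(\m)}$ already contain (respectively, determine) the $T(\g)$-directions, and your proposal counts them twice.
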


\begin{proof} As we have already noted before, $\overline{\E(\m)}$ is the free module over the free graded Lie algebra generated by $\g$ in degree one. Therefore, the chain complex $\overline{\E(\m)}\otimes U(\E(\g))$ is isomorphic to $\m\otimes T(\g)$; one verifies directly that it is the standard complex for the Leibniz homology of $\g^{\mathrm{opp}}$ with coefficients in $\m$.

The $n$th term of the cochain complex $\mathrm{Hom}(U(\E(\g)), \overline{\M(\m)})$ consists of $\E(\g)$-module morphisms of degree $-n$, with the differential $$df = f\circ d - (-1)^{\deg f} d\circ f.$$ It can be identified with the space of linear maps 
$\mathrm{Hom}_k(\g^{\otimes n}, \m)$, since $\overline{\M(\m)}$ is generated in degree zero by $\m$. Indeed, let $f_i$ be the restriction of $f\in\mathrm{Hom}(U(\E(\g)), \overline{\M(\m)})$ to $\g^{\otimes i}$. Then,  given $f_n: \g^{\otimes n}\to\m$ the $\E(\g)$-module morphism $f$ is reconstructed by applying the left action of $\E(\g)$. In particular, taking the tensor product with (the identity morphism of) $\g\in\E(\g)$ we obtain the map
$$f_{n+1}: \g^{\otimes (n+1)}\to\overline{\M_1(\m)}$$
$$x_1 \ldots x_{n+1}\mapsto (-1)^{n} \llbracket x_1, f_n(x_2, \ldots, x_{n+1})\rrbracket$$
and, therefore,
\begin{multline*}
(-1)^n d\circ f = (-1)^{2n} (\llbracket dx_1, f_n(x_2, \ldots, x_{n+1})\rrbracket - \llbracket x_1, df_n(x_2, \ldots, x_{n+1})\rrbracket )\\ =  
[x_1, f_n(x_2, \ldots, x_{n+1})]+[f_n(x_2, \ldots, x_{n+1}), x_1],
\end{multline*}
which is the correction term to (\ref{leibcomplex}) in the case of coefficients in a Leibniz algebra representation, see \cite{Loday, LP0}.
\end{proof}

\subsection{Chevalley-Eilenberg homology and cohomology with coefficients in a Leibniz algebra representation}

Consider the chain complex $$\overline{\E(\m)}\otimes_{\E(\g)} U(\M(\g))$$ and the cochain complex
$$\mathrm{Hom}_{\E(\g)}(U(\M(\g)), \overline{\M(\m)}).$$ When $\m$ is a representation of $\glie$, these complexes 
coincide with $\m\otimes_{\glie} U(\M(\g))$ and 
$\mathrm{Hom}_{\glie}(U(\M(\g)), \m)$, respectively. 

Therefore, we may denote the respective homology and cohomology groups by $H_*(\g,\m)$ and $H^*(\g,\m)$.

\begin{proposition}
The groups $H_1(\g,\m)$ and $H^1(\g,\m)$ coincide with the respective Leibniz (co)homology groups $HL_1(\g,\m)$ and $HL^1(\g,\m)$.
The natural map $HL_2(\g,\m)\to H_2(\g,\m)$ is surjective while $H^2(\g,\m)\to HL^2(\g,\m)$ is injective.
\end{proposition}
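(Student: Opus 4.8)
The plan is to construct all four comparison maps from the single morphism of enveloping DG Lie algebras $p\colon\E(\g)\to\M(\g)$, which exists because $\E(\g)$ is the initial object of the category of enveloping DG Lie algebras of $\g$ (Remark~\ref{init}), and then to show that $p$ is close enough to an isomorphism in low degrees. On underlying graded Lie algebras, $p$ is the identity in degrees $0$ and $1$ (both $\E(\g)$ and $\M(\g)$ have $\glie$ in degree $0$ and $\g$ in degree $1$), and in degree $2$ it is the map $\E_2(\g)\to\gann$, $\llbracket x,y\rrbracket\mapsto d\llbracket x,y\rrbracket=[x,y]_\g+[y,x]_\g$, which is onto $\gann$ by the definition of the kernel of $\g$. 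Since $\gann$, the degree-$2$ component of $\M(\g)$, is the image of the bracket $\g\otimes\g\to\gann$, the Lie algebra $\M(\g)$ is generated by its components in degrees $\le 1$; hence the induced morphism of DG universal enveloping algebras $U(p)\colon U(\E(\g))\to U(\M(\g))$ is surjective, and, comparing the decompositions $U(\E(\g))\cong U(\glie)\otimes_k T(\g)$ and $U(\M(\g))\cong U(\glie)\otimes_k U(\M(\g)_{>0})$ established in the previous subsections, $U(p)$ is an isomorphism in homological degrees $0$ and $1$, while in degree $2$ it is the surjection $\g^{\otimes 2}\to U(\M(\g)_{>0})_2$, $x\otimes y\mapsto x\cdot y$.

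Applying the functors $\overline{\E(\m)}\otimes_{\E(\g)}(-)$ and $\Hom_{\E(\g)}(-,\overline{\M(\m)})$ to $U(p)$ produces the natural comparison maps $HL_*(\g,\m)\to H_*(\g,\m)$ and $H^*(\g,\m)\to HL^*(\g,\m)$ of the statement. Re-running the identifications of the previous subsections with $U(p)$ inserted, the comparison chain map in homological degree $n$ is $U(p)_n$ tensored with the coefficients --- it is $\mathrm{id}_\m\otimes U(p)_n\colon\m\otimes_k\g^{\otimes n}\to\m\otimes_k U(\M(\g)_{>0})_n$ when $\m$ is a module over $\glie$ --- and the comparison cochain map in degree $n$ is precomposition with $U(p)_n$. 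Since $U(p)$ is an isomorphism in degrees $0$ and $1$ and surjective in degree $2$, the comparison chain map is an isomorphism in degrees $0$ and $1$ and surjective in degree $2$, and, precomposition with a surjection being injective, the comparison cochain map is an isomorphism in degrees $0$ and $1$ and injective in degree $2$.

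The conclusion is now an elementary diagram chase. If a chain map is an isomorphism in degrees $\le 1$ and surjective in degree $2$, then its degree-$1$ component carries $\ker d_1$ onto $\ker d_1$ and, thanks to the surjectivity in degree $2$, carries $\mathrm{im}\,d_2$ onto $\mathrm{im}\,d_2$, so it induces an isomorphism on $H_1$; moreover every $2$-cycle on the target lifts along the degree-$2$ surjection to a chain on the source which is again a cycle, since the map is injective in degree $1$, so the map is surjective on $H_2$. The dual argument, using injectivity in degree $2$, applies to the cochain map. This gives $H_1\cong HL_1$ and $H^1\cong HL^1$, the surjectivity of $HL_2\to H_2$, and the injectivity of $H^2\to HL^2$.

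The main obstacle I anticipate is bookkeeping rather than anything conceptual: one must check that the coefficient functors $\overline{\E(\m)}\otimes_{\E(\g)}(-)$ and $\Hom_{\E(\g)}(-,\overline{\M(\m)})$ process $U(p)$ exactly as they process the identity morphism in the propositions above, so that the Leibniz and Chevalley-Eilenberg complexes coincide in degrees $0$ and $1$ on the nose and not merely up to quasi-isomorphism, and one must keep the case of $\glie$-module coefficients separate from that of a general Leibniz representation. Everything else reduces to the two facts that $U(p)$ is an isomorphism in degrees $\le 1$ and that $U(\M(\g))$ is generated in degrees $\le 1$.
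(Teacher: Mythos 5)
Your proposal is correct and follows essentially the same route as the paper: both exploit the natural map $\E(\g)\to\M(\g)$, which is the identity in degrees $0$ and $1$ and surjective in degree $2$, and then transport this through $U(-)$ and the coefficient functors to compare the Leibniz and Chevalley--Eilenberg complexes. The only cosmetic difference is that your degree-$2$ conclusions come from a standard diagram chase using injectivity of the comparison in degree $1$, whereas the paper phrases the same point via the injectivity of $\gann\to\g$.
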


\begin{proof}
Indeed, the quotient $\E(\g)\to\M(g)$ is the identity in degrees 0 and 1. In degree 2 we have $\E_2(\g)=S^2(\g)$ with $d_2(x,y) = [x,y]+[y,x]$, while $\M_2(\g)=\gann$ and $d_2: \M_2(\g)\to \M_1(\g)=\g$ is injective. Since $S^2(\g)\to\gann$ is surjective, the quotient map 
induces an isomorphism in the homology in degrees 0 and 1 and the same is true for the induced maps 
$$\overline{\E(\m)}\otimes U(\E(\g))\to\overline{\E(\m)}\otimes U(\M(\g))$$
and
$$\mathrm{Hom}(U(\E(\g)), \overline{\M(\m)})\to\mathrm{Hom}(U(\M(\g)), \overline{\M(\m)}).$$
The surjectivity of the homology and the injectivity of the cohomology in degree 2 follows from the surjectivity of the quotient map in degree 2 and the injectivity of $\gann\to\g$.

\end{proof}

\section{The free Lie algebra complex}
Consider the graded tensor algebra $T(V)$ generated by a vector space $V$ in degree 1. Inside $T(V)$, the vector space $V$ generates the free graded Lie algebra $F(V)$ by means of the operation of the (graded) commutator
$$\llbracket x,y\rrbracket = xy - (-1)^{|x||y|} yx.$$
For a Leibniz algebra $\g$, the graded Lie algebra $F(\g)$ is a subcomplex of the standard complex $T(\g)$ that calculates the Leibniz homology of $\g$. Pirashvili in \cite{Pirashvili} conjectures the following:
\begin{proposition}
If the Leibniz algebra $\g$ is free, the homology of $F(\g)$ vanishes in dimensions greater than one. 
\end{proposition}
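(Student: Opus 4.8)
The plan is to replace the complex $F(\g)$ by the differential graded Lie algebra $\E(\g)$, show that $\E(\g)$ is acyclic when $\g$ is free, and transport the result back. The point is that, although $F(\g)$ is not a DG Lie subalgebra of $T(\g)$ (the Leibniz differential is not a derivation of the graded bracket), it is, up to the degree‑zero part, the DG Lie algebra $\E(\g)$. Concretely, as a graded Lie algebra $\E(\g)=\glie\ltimes F(\g)$ with $\glie$ in degree $0$ and $\E(\g)_{\ge 1}$ the free graded Lie algebra $F(\g)$ on $\g$; its differential restricts on $\E(\g)_{\ge 2}=F(\g)_{\ge 2}$ to the Leibniz differential of $F(\g)\subseteq T(\g)$, and in degree $1$ it is the quotient map $\g\to\glie$ (whereas the Leibniz differential on $F(\g)_1=\g$ is zero). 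Hence $\glie$, placed in degree zero, is a subcomplex of $\E(\g)$ with quotient complex exactly $F(\g)$ (with its Leibniz differential), and the short exact sequence
$$0\longrightarrow \glie \longrightarrow \E(\g)\longrightarrow F(\g)\longrightarrow 0$$
has $H_*(\glie)$ concentrated in degree $0$; its long exact homology sequence gives $H_n(F(\g))\cong H_n(\E(\g))$ for every $n\ge 2$. So it suffices to show that $\E(\g)$ is acyclic when $\g$ is free (recall $H_0(\E(\g))=H_1(\E(\g))=0$ automatically, by the Lemma characterising $\overline{\mathbf{DGLie}}$).

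\textbf{Reduction to the enveloping algebra.} Since $\E(\g)$ is a non‑negatively graded DG Lie algebra over a field of characteristic zero, $U$ induces an isomorphism $H_*(U(\E(\g)))\cong U(H_*(\E(\g)))$ (the full form of the result of Quillen already invoked in Section~4), and $U(\mathfrak l)=k$ forces $\mathfrak l=0$. Therefore $\E(\g)$ is acyclic precisely when $U(\E(\g))$ is acyclic, i.e. when $H_*(U(\E(\g)))=k$.

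\textbf{The free case.} Now let $\g$ be the free Leibniz algebra on a vector space $V$. Then $\glie$ is the free Lie algebra on $V$, so $U(\glie)=T(V)$ is generated by $V$, and these degree‑zero generators carry zero differential. Moreover $\g$ is generated by $V$ as a $\glie$‑module: the $\glie$‑submodule $U(\glie)\cdot V\subseteq\g$ is a Leibniz subalgebra (because $[x,y]_\g=d(x)\cdot y$), hence is all of $\g$ by freeness. Since the $\glie$‑action on $\g$ is realised inside $U(\E(\g))$ by the commutator, $U(\E(\g))$ is generated as a DG algebra by two copies of $V$, one in degree $0$ and one in degree $1$, the differential carrying the second isomorphically onto the first (on $V$ the quotient map $\g\to\glie$ is the identity). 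This gives a surjection $T\bigl(\operatorname{Cone}(V)\bigr)\twoheadrightarrow U(\E(\g))$ of DG algebras, where $\operatorname{Cone}(V)=\bigl(V\xrightarrow{\ \sim\ }V\bigr)$. Using the identification $U(\E(\g))\cong U(\glie)\otimes_k T(\g)$ together with $\dim U(\glie)_w=(\dim V)^w$ and $\dim(\g^{\otimes n})_w=\binom{w-1}{n-1}(\dim V)^w$ (the latter from $\g\cong\bar T(V)$ as a weight‑graded space), a direct count shows both sides have dimension $\binom{w}{n}(\dim V)^w$ in bidegree (homological degree $n$, weight $w$); so the surjection is an isomorphism (for infinite‑dimensional $V$ one passes to the colimit over finite‑dimensional subspaces). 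Thus $U(\E(\g))$ is the tensor algebra on an acyclic two‑term complex, and the tensor algebra on an acyclic complex of vector spaces is acyclic; hence $H_*(U(\E(\g)))=k$, which completes the argument.

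\textbf{Main obstacle.} The crux is the identification $U(\E(\g))\cong T\bigl(\operatorname{Cone}(V)\bigr)$ for free $\g$: the generation statement uses the structure of free Leibniz algebras (generation of $\g$ by $V$ over $\glie$), and upgrading the resulting surjection to an isomorphism requires the Hilbert–series bookkeeping above — elementary, but it must be done with care. Secondary points to nail down are the earlier facts used in the first two paragraphs: that $\E(\g)_{\ge1}$ is the free graded Lie algebra on $\g$ and that its differential coincides with the Leibniz differential of $F(\g)\subseteq T(\g)$ in degrees $\ge 2$ (so that the short exact sequence of complexes is the stated one), and the characteristic‑zero comparison $H_*(U(L))\cong U(H_*(L))$.
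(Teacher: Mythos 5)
Your proof is correct, and it follows the paper's overall strategy --- pass from $F(\g)$ to the universal enveloping DG Lie algebra $\E(\g)$, show that for a free Leibniz algebra everything is ``free on the acyclic cone of the space of generators'', and invoke the characteristic-zero fact that the tensor algebra on an acyclic complex is acyclic --- but you execute the key step differently. The paper identifies $\E(\g)$ itself with the free DG Lie algebra on the two-term complex $W=(W_X\xrightarrow{\mathrm{id}}W_X)$, as a formal consequence of the adjunctions (free Leibniz algebra followed by the left adjoint $\E$), and deduces acyclicity of $\E(\g)$ from that of $T(W)$, using that the free graded Lie algebra inside $T(W)$ inherits acyclicity in characteristic zero. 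You instead work one level up: you identify $U(\E(\g))$ with $T(\mathrm{Cone}(V))$ via a generation argument (a free Leibniz algebra is generated by $V$ over $\glie$, and the $\glie$-action becomes a commutator in $U(\E(\g))$) combined with a weight-by-weight dimension count based on $U(\E(\g))\cong U(\glie)\otimes_k T(\g)$, $U(\glie)=T(V)$ and $\g\cong\overline{T}(V)$; the count is right (both sides have dimension $\binom{w}{n}(\dim V)^w$ in bidegree $(n,w)$, by the hockey-stick identity), and you then descend to $\E(\g)$ through the characteristic-zero isomorphism $H_*(U(L))\cong U(H_*(L))$. Your route costs the Hilbert-series bookkeeping, the structure theory of free Leibniz algebras, and the stronger PBW/Quillen statement, but it avoids justifying the adjunction-level identification $\E(\g)\cong (F(W),d)$ and the transfer of acyclicity from $T(W)$ to its free Lie subalgebra; moreover, your short exact sequence $0\to\glie\to\E(\g)\to F(\g)\to 0$ and its long exact sequence make precise the degree-one discrepancy that the paper handles only implicitly. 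The one shared ingredient you flag but do not prove --- that $\E_{>0}(\g)$ is free on $\g$ with differential matching the Leibniz differential in degrees $\geq 2$ --- is established in the paper by the composite $\E(\g)\to U(\E(\g))\to k\otimes_{U(\glie)}U(\E(\g))=T(\g)$, so it is available and does not constitute a gap.
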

\begin{proof}
The components of positive degree of the universal enveloping DG Lie algebra $\E(\g)$ form the free graded Lie algebra $\E_{>0}(\g)$ on $\g$. We claim that the differential on $\E_{>0}(\g)$ induced from $\E(\g)$ coincides with the one coming from the Leibniz complex $T(\g)$.  
Indeed, we have natural maps of DG vector spaces
$$\E(\g)\to U(\E(\g))\to k\otimes_{U(\glie)}U(\E(\g))=T(\g)$$
whose composition identifies $\E_{>0}(\g)$ with $F(\g)$.

Now, assume that $\g$ is a free Leibniz algebra on the set $X$ and consider the graded vector space 
$$W= \ldots\to 0\to 0\to W_X \xrightarrow{{\mathrm{id}}}W_X,$$
where $W_X$ is the vector space spanned by $X$, placed in degrees 0 and 1. 
It follows from the definitions that $\E(\g)$ can be identified with the free DG Lie algebra $(F(W),d)$  generated by $W$.

By Proposition~2.1 of \cite[Appendix B]{Q}, since $W$ is acyclic, the tensor algebra $T(W)$ has no homology in positive dimensions and, therefore, the free DG Lie algebra $\E(\g)$ on $W$ is also acyclic;  this implies the statement of the Proposition.

\end{proof}

\end{document}